\newtheorem{theorem}{Theorem}[section]
\newtheorem{corollary}[theorem]{Corollary}
\newtheorem{lemma}[theorem]{Lemma}
\theoremstyle{definition}
\newtheorem{remark}[theorem]{Remark}
\newcommand{\R}{\mathbb{R}}
\newcommand{\Lap}{\Delta}
\newcommand{\eps}{\varepsilon}
\newcommand{\fr}{\partial}
\newcommand{\grad}{\nabla}
\newcommand{\Div}{\operatorname{div}}
\newcommand{\norm}[1]{\left\Vert#1\right\Vert}
\title{On the Blow-up Criterion of Magnetohydrodynamics Equations in Homogeneous Sobolev Spaces}
\author{Diego Marcon,  \hspace*{.2cm} Wilberclay G. Melo, \hspace*{.2cm} Lineia Schutz, \hspace*{.2cm} and Juliana S. Ziebell}
\date{}
\begin{document}

\maketitle

\def\signdm{\bigskip\begin{center} {\sc Diego Marcon\par\vspace{3mm}
Instituto de Matemática\\
Universidade Federal do Rio Grande do Sul (UFRGS)\\
Porto Alegre 91509-900, Brazil\\
email:} {\tt diego.marcon@ufrgs.br}
\end{center}}

\def\signwm{\bigskip\begin{center} {\sc Wilberclay G. Melo\par\vspace{3mm}
Departamento de Matemática\\
Universidade Federal de Sergipe (UFS) \\
São Cristóvão 49100-000,  Brazil. \\
email:} {\tt wilberclay@gmail.com}
\end{center}}

\def\signls{\bigskip\begin{center} {\sc Lineia Schutz\par\vspace{3mm}
Instituto de Matemática\\
Universidade Federal do Rio Grande do Sul (UFRGS)\\
Porto Alegre 91509-900, Brazil\\
email:} {\tt lineia.schutz@ufrgs.br}
\end{center}}

\def\signjz{\bigskip\begin{center} {\sc Juliana S. Ziebell\par\vspace{3mm}
Instituto de Matemática, Estatística e Física\\
Universidade Federal do Rio Grande (FURG)\\
Rio Grande 96201-900, Brazil\\
email:} {\tt julianaziebell@furg.br}
\end{center}}

\begin{abstract}
In this paper, we obtain new lower bounds on the Homogeneous Sobolev--norms of the maximal solution of the Magnetohydrodynamics Equations. This gives us some insight on the blow-up behavior of the solution. We utilize standard techniques from the Navier--Stokes Equations.
\end{abstract}

\textbf{Key words:} {\it Magnethohydrodinamics  equations,
blow-up criterion, lower bounds }

\textbf{AMS Mathematics Subject Classification:} 35Q40, 35Q60, 35Q61, 76W05.

\section{Introduction}

Magnetohydrodynamics (MHD) describes the motion of electrically conducting fluids, such as liquid metals, salt water, or plasmas, in the presence of magnetic fields. The MHD Equations form a system of equations that combine fluid dynamics with Maxwell's theory of electromagnetism. For an introduction on the subject, see \cite{schnack}. In this paper, we consider the three--dimensional MHD Equations for incompressible flows:
\begin{equation}\label{MHD2}
\left\{
  \begin{array}{ll}
    \displaystyle \frac{\fr u}{\fr t}  + (u \cdot \nabla ) u = \nu \Delta u + \sigma (b \cdot \nabla) b - \nabla \Big( p + \frac{1}{2}\sigma |b|^2 \Big), & \hbox{ in } \R^+ \times \R^3; \\
    & \\
    \displaystyle \frac{\fr b}{\fr t}  + (u \cdot \nabla ) b = \nu_{m} \Delta b + (b \cdot \nabla ) u, & \hbox{ in } \R^+ \times \R^3; \\
    & \\
    \displaystyle \Div u = 0, \quad \Div b = 0, & \hbox{ in } \R^+ \times \R^3; \\
    & \\
    \displaystyle u(0) = u^0 , \quad b(0) = b^0  \ \mbox{ with } \Div u^0 = 0, \qquad \Div b^0 = 0, & \hbox{ in } \R^3,
  \end{array}
\right.
\end{equation} where $u(x,t)= \big(u_{1}(x,t),u_{2}(x,t), u_{3}(x,t) \big) \in \R^3$  denotes the velocity field of the fluid and $b(x,t)= \big( b_{1}(x,t), b_{2}(x,t), b_{3}(x,t) \big) \in \R^3 $ the magnetic field. The remaining terms are the hydrostatic pressure $p(x,t) \in \R$, the initial data $u_{0}$, $b_{0}$ in $L^{2}(\R^3; \R^3)$, and the positive constants $\nu$, $\nu_m$, and $\sigma$. These constants denote, respectively, the kinematic constant $\nu = 1/R_e$, the magnetic diffusivity constant $\nu_{m}= 1/R_m$, and $\sigma= M^2/R_eR_m,$ where $R_e$ is the Reynolds number, $R_m$ is the magnetic Reynolds number, and $M$ is the Hartman number.

The existence and the uniqueness of solutions of MHD systems have been extensively studied \cite{Bealekatomajda, caflisch, DL, hexin2005, schonbek, sermange, Yuan2008, Yuan2010, zhou2005}. Duvaut and Lions \cite{DL} introduced a class of weak solutions with finite energy. In the two--dimensional case, the existence of a unique classical solution has been proved  by Sermange and Teman \cite{sermange}. Whether smooth solutions exist in the three--dimensional case is a major open problem.  In general, one can only ensure the existence of a maximal time $T^* >0$ for which the MHD system \eqref{MHD2} has a classical solution $u(x,t), b(x,t)$ defined for $(x,t) \in [0, T^*) \times \R^3.$ This is the analog of the corresponding Incompressible Navier--Stokes problem. In fact, in the absence of a magnetic field, the MHD Equations reduce to the Navier--Stokes Equations. For regularity results and blow--up estimates of the Navier--Stokes Equations, see \cite{B2010, Kato1984, KreissHagstromLorenzZingano2003, Leray1934, Masuda1984, Robinson2012, Wiegner1987}.

For Homogeneous Sobolev Spaces $H^{s_{0}}(\R^3)$, Yuan \cite{Yuan2008} obtained the existence of strong solutions of \eqref{MHD2} in $[0, T^{\ast}) \times \R^3$. More precisely, for $s_{0} > 3/2$ and for divergence--free $u^{0}, b^{0} \in H^{s_{0}}(\R^3)$, there exists a positive time $T^\ast > 0$, depending on the initial conditions $u^0, b^0$, such that \eqref{MHD2} admits a unique classical solution $(u, b)$, defined on $[0, T^*) \times \R^3$. Motivated by He and Xin \cite{hexin2005}, Yuan also provides blow-up criteria that is independent of the magnetic field $b$, see \cite[Theorem 1.1]{Yuan2008}.

In this paper, we suppose $T^{\ast} < \infty$ and  we establish properties of the maximal solution of \eqref{MHD2}. Inspired by Benameur \cite{B2010}, we obtain blow--up estimates for strong solutions of the Incompressible MHD Equations in the Homogeneous Sobolev Space $\dot{H}^{s}(\R^3)$, for $s>1/2$.

Set $\lambda = \min \{\nu, \nu_{m}\}$ and $\kappa=\max\{1,\sigma\}^{-1}$. Our main theorem is the following:

\begin{theorem}\label{mainresult}
Fix $s_{0}> 3/2$ and let $u^0$, $b^0 \in H^{s_{0}}(\R^3)$ be such that $\Div u^{0} = \Div b^{0} = 0$.
Consider $u$, $b \in C\big([0,T^*), H^s(\R^3)\big)$ the strong solution of \eqref{MHD2}, defined in its maximal interval of definition $[0, T^*)$.  If $T^* < + \infty$, then
\begin{itemize}
\item[$(i)$] For every $\delta \in(0,1)$ and for every $s\ge1/2+\delta$, we have
\begin{equation}\label{i}
 \norm{\big(u,b\big)(t)}_{\dot{H}^s(\R^3)} \norm{\big(u,b\big)(t)}_{L^2( \R^3)}^{p(s,\delta)} \ge \frac{c(s, \delta,\sigma)\lambda^{q(s,\delta)}}{(T^*-t)^{r(s,\delta)}},
\end{equation} where $$p(s,\delta)  :=  \frac{2s}{1+2\delta} - 1, \quad  q(s,\delta)  :=  \frac{(2-\delta)s}{1+2\delta}, \quad \hbox{ and } r(s,\delta)  :=  \frac{s\delta}{1 + 2\delta}.$$
\item[$(ii)$] For all $t\in[0,T^*)$,
\begin{equation}\label{fourier_thm}
\norm{\mathcal{F}\big(u(t),b(t)\big)}_{L^1(\R^3)}\geq \frac{\kappa (2\pi)^{3}\lambda^{1/2}}{ 3\sqrt{6}}{(T^{*}-t)^{-1/2}},
\end{equation}

\item[$(iii)$] For every $s>\frac{3}{2}$,
\begin{equation}
\norm{(u,b)(t)}_{L^2( \R^3)}^{\frac{2s}{3}-1}\norm{(u,b)(t)}_{\dot{H}^s( \R^3)}\geq\frac{c(s,\sigma)\lambda^{s/3}}{(T^*-t)^{s/3}}.
\end{equation}

\end{itemize}
\end{theorem}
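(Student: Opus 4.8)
The plan is to derive, for each part, an autonomous differential inequality $\frac{d}{dt}\Phi\le K\,\Phi^{1+\theta}$ for a suitable norm $\Phi(t)$ of the combined field $(u,b)$, to note that $\Phi(t)\to+\infty$ as $t\uparrow T^*$ (otherwise the local well-posedness theory behind Yuan's existence result would continue the solution past $T^*$, contradicting maximality), and to integrate on $[t,T^*)$. Since $\frac{d}{dt}\Phi\le K\Phi^{1+\theta}$ is equivalent to $\frac{d}{dt}\Phi^{-\theta}\ge-\theta K$, integrating from $t$ to $T^*$ with $\Phi(T^*)=+\infty$ gives $\Phi(t)\ge(\theta K)^{-1/\theta}(T^*-t)^{-1/\theta}$. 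All the exponents $r,p$ and all powers of $\lambda$ in the statement are just the bookkeeping of $\theta$ and of the constant $K$ produced by the nonlinear estimate. I use $\lambda=\min\{\nu,\nu_m\}$ to minorize both dissipations by one coefficient, I pair the two evolution equations so the magnetic terms $\sigma(b\cdot\grad)b$ and $(b\cdot\grad)u$ combine, I track $\sigma$ through $\kappa=\max\{1,\sigma\}^{-1}$, and I use that the energy $\norm{u(t)}_{L^2}^2+\sigma\norm{b(t)}_{L^2}^2$ is nonincreasing. The pressure and magnetic-pressure gradients drop out of every energy pairing because $\Div u=\Div b=0$.

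For $(ii)$ I work in the Wiener algebra. Applying $\F$ to \eqref{MHD2}, using $\widehat{fg}=\F f * \F g$ with $\norm{\F f * \F g}_{L^1}\le\norm{\F f}_{L^1}\norm{\F g}_{L^1}$, and estimating $\abs\xi\le\abs{\xi-\eta}+\abs\eta$ inside the convolution, one obtains
\[
\frac{d}{dt}\norm{\F(u,b)}_{L^1}+\lambda\norm{\,\abs\xi^2\,\F(u,b)}_{L^1}\le \frac{C}{\kappa}\,\norm{\,\abs\xi\,\F(u,b)}_{L^1}\,\norm{\F(u,b)}_{L^1}.
\]
The Cauchy--Schwarz interpolation $\norm{\abs\xi\,\F(u,b)}_{L^1}\le\norm{\abs\xi^2\F(u,b)}_{L^1}^{1/2}\norm{\F(u,b)}_{L^1}^{1/2}$ followed by Young's inequality absorbs the dissipative term and leaves, for $y(t)=\norm{\F(u,b)}_{L^1}$, the closed inequality $y'\le \frac{C}{\kappa^2\lambda}\,y^3$. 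This is the case $\theta=2$ of the scheme, so $y(t)\ge c\,\kappa\,\lambda^{1/2}(T^*-t)^{-1/2}$; carrying the Fourier normalization $(2\pi)^3$, the coupling $\kappa$, and the Young constant through the computation yields the stated $\kappa(2\pi)^3\lambda^{1/2}/(3\sqrt6)$.

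For $(i)$ and $(iii)$ I run the same scheme in $\dot H^s$. Applying $\Lambda^s=(-\Lap)^{s/2}$ to \eqref{MHD2}, pairing with $\Lambda^s u$ and $\Lambda^s b$ and using the divergence-free conditions to cancel the top-order transport contributions, the energy balance is $\tfrac12\frac{d}{dt}\norm{(u,b)}_{\dot H^s}^2+\lambda\norm{(u,b)}_{\dot H^{s+1}}^2\le\abs{\mathrm{NL}}$, where $\mathrm{NL}$ collects the four MHD nonlinearities after commutation. The crucial step is the scale-consistent estimate, valid for $\delta\in(0,1]$ and $s\ge\tfrac12+\delta$,
\[
\abs{\mathrm{NL}}\le C(s,\sigma)\,\norm{(u,b)}_{\dot H^{s+1}}\,\norm{(u,b)}_{\dot H^{s+1-\delta}}\,\norm{(u,b)}_{\dot H^{1/2+\delta}},
\]
proved by a Bony paraproduct decomposition of the quadratic terms. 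I then interpolate $\norm{(u,b)}_{\dot H^{s+1-\delta}}\le\norm{(u,b)}_{\dot H^s}^{\delta}\norm{(u,b)}_{\dot H^{s+1}}^{1-\delta}$ and $\norm{(u,b)}_{\dot H^{1/2+\delta}}\le\norm{(u,b)}_{L^2}^{1-w}\norm{(u,b)}_{\dot H^s}^{w}$ with $w=\tfrac{1+2\delta}{2s}$ (the latter legitimate precisely because $\tfrac12+\delta\le s$), so that $\abs{\mathrm{NL}}\le C\norm{(u,b)}_{\dot H^{s+1}}^{2-\delta}\norm{(u,b)}_{\dot H^s}^{\delta+w}\norm{(u,b)}_{L^2}^{1-w}$. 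Young's inequality with the conjugate pair $(\tfrac{2}{2-\delta},\tfrac{2}{\delta})$ absorbs $\norm{(u,b)}_{\dot H^{s+1}}^{2-\delta}$ into $\lambda\norm{(u,b)}_{\dot H^{s+1}}^2$ and produces the factor $\lambda^{-(2/\delta-1)}$; freezing the nonincreasing $L^2$ norm at time $t$ then gives, with $\phi=\norm{(u,b)}_{\dot H^s}^2$, the inequality $\phi'\le C\lambda^{-(2/\delta-1)}\norm{(u,b)(t)}_{L^2}^{\nu_0}\phi^{1+\theta}$ with $\theta=\tfrac{1+2\delta}{2s\delta}$. Integrating and reading off powers reproduces $r=\tfrac{s\delta}{1+2\delta}$, $q=\tfrac{(2-\delta)s}{1+2\delta}$, $p=\tfrac{2s}{1+2\delta}-1$; part $(iii)$ is exactly the endpoint $\delta=1$ (so that $\dot H^{s+1-\delta}=\dot H^s$ and $\dot H^{1/2+\delta}=\dot H^{3/2}$), which forces the stronger hypothesis $s>\tfrac32$ needed for the interpolation of $\dot H^{3/2}$ between $L^2$ and $\dot H^s$.

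The main obstacle, throughout, is the nonlinear estimate with the exact combination of homogeneous norms, together with the precise constants. One must verify, via the paraproduct/commutator calculus, that the four nonlinearities $(u\cdot\grad)u$, $\sigma(b\cdot\grad)b$, $(u\cdot\grad)b$, $(b\cdot\grad)u$ recombine after pairing into a single top-order factor $\norm{(u,b)}_{\dot H^{s+1}}$ times lower-order factors, that the diffusivities enter only through $\lambda$ and the coupling only through $\kappa$, and that nothing is lost at the critical index $\tfrac12$ when $\delta$ is small (respectively at $\tfrac32$ when $\delta=1$). Equally essential, and logically prior to the integration step, is the continuation criterion guaranteeing $\Phi(T^*)=+\infty$ for the chosen norm; without it the ODE argument is vacuous. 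The remaining work---the explicit numerical constants such as $(2\pi)^3/(3\sqrt6)$ in $(ii)$ and $c(s,\sigma)$, $c(s,\delta,\sigma)$ in $(i)$ and $(iii)$---is careful but routine bookkeeping once the estimate above is in hand.
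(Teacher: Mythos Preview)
Your scheme for $(i)$ and $(ii)$ is correct and matches the paper's argument closely: the same product estimate (Chemin's lemma, the paper's Lemma~\ref{lema_zingano}), the same interpolations (Lemmas~\ref{interpolacao} and~\ref{interpolacao_zingano}), Young's inequality to absorb the dissipation, and an ODE-type comparison on $[t,T^*)$. For $(i)$ your packaging is even a bit cleaner than the paper's: the paper first applies Gronwall to obtain an exponential bound in $\int_a^t\norm{(u,b)}_{\dot H^{1/2+\delta}}^{2/\delta}$ and then integrates that exponential, whereas you go straight to the closed inequality $\phi'\le K\phi^{1+\theta}$ and integrate once. For $(ii)$ the two arguments are essentially identical. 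One cosmetic remark: the paper does not use commutator cancellation of a ``top-order transport term''; it simply writes $\langle(u\cdot\nabla)u,u\rangle_{\dot H^s}=-\langle u\otimes u,\nabla u\rangle_{\dot H^s}$ and estimates $\norm{u\otimes u}_{\dot H^s}$ directly by the product lemma, so no paraproduct machinery is needed.

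There is, however, a genuine gap in your treatment of $(iii)$. You propose to obtain it as the endpoint $\delta=1$ of $(i)$, but the product estimate you rely on,
\[
\norm{fg}_{\dot H^s}\le C\,\norm{f}_{\dot H^{1/2+\delta}}\norm{g}_{\dot H^{s+1-\delta}},
\]
is precisely Chemin's lemma with indices $\eta=\tfrac12+\delta$ and $\eta'=s+1-\delta$, and the hypothesis $\eta<\tfrac32$ is sharp: at $\delta=1$ one has $\eta=\tfrac32$, the critical index where $\dot H^{3/2}(\R^3)$ fails to embed in $L^\infty$ and the product law breaks down. So your claimed range ``$\delta\in(0,1]$'' for the nonlinear estimate is not justified at the right endpoint, and $(iii)$ does not follow from your argument for $(i)$. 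The paper avoids this obstruction entirely: it proves $(iii)$ by combining the already-established bound $(ii)$ with the elementary interpolation of Lemma~\ref{ZinganoA8},
\[
\norm{\widehat{(u,b)}}_{L^1}\le C(s)\,\norm{(u,b)}_{L^2}^{1-3/(2s)}\norm{(u,b)}_{\dot H^s}^{3/(2s)}\qquad(s>\tfrac32),
\]
and then raises both sides to the power $2s/3$. This route uses only the Wiener-algebra estimate you already have for $(ii)$ and a Cauchy--Schwarz/scaling inequality, with no borderline product law.
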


\begin{remark}
We observe that Theorem \ref{mainresult} extends and improves the results of \cite{B2010}. If $b=0$, the MHD Equations become the Navier--Stokes Equations. In this case, the choice $\delta= 1/2$ provides us with Benameur's estimate, see \cite[Theorem 1.3, (1.2)]{B2010}. Moreover, our approach allows us to consider any $s > 3/2$ whereas in \cite{B2010}, the author assumes $s>5/2$.
\end{remark}

\begin{remark}
We note that, in \eqref{fourier_thm} above, $\norm{\mathcal{F}(u,b)(t)}_{L^1}$ is finite, for all $t \in [0, T^\ast)$. Indeed, we only need to observe that, for any $v\in \dot{H}^{s}$,
\begin{equation}\label{L1}
\begin{split}
\norm{\mathcal{F}(v)}_{L^1} & \leq \sqrt{\int_{\R^3} (1+ |\xi|^2)^{-s} \ d\xi }  \ \cdot \sqrt{\int_{\R^3} (1+ |\xi|^2)^{s}\vert \hat{v}(\xi)\vert^2 \ d\xi }  \\
            & \stackrel{\operatorname{def}}{=} C(s)  \norm{v}_{H^{s}}.
\end{split}
\end{equation}
\end{remark}

Now, we show how different choices of $\delta$ and $s$ can provide powerful estimates.

\begin{corollary}\label{corolario1}
Let $u^0, b^0 \in H^{s_{0}}(R^3)$, with $s_{0} > 3/2$, be such that $\Div u^{0} = \Div b^{0} = 0$. Let $u$, $b \in C([0,T^*), H^s(\R^3))$ be the strong solution for the system \eqref{MHD2} defined in its maximal interval $[0, T^*)$. If $T^* < \infty$, then, for each $s\geq 1$,
\begin{equation}\label{consequencia1}
\norm{\big(u,b \big)(t)}_{\dot{H}^s} \norm{\big(u,b\big)(t)}_{L^2}^{s-1} \ge \frac{c(s,\sigma)\lambda^{3s/4}}{(T^*-t)^{s/4}},  \hbox{ for all } t\in[0,T^*).
\end{equation}
\end{corollary}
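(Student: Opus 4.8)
The plan is to obtain Corollary \ref{corolario1} as the single special case $\delta = 1/2$ of Theorem \ref{mainresult}$(i)$; no new analysis is required. First I would check that the hypothesis of the corollary is compatible with the constraint in the theorem: the theorem demands $s \ge 1/2 + \delta$, and with $\delta = 1/2$ this becomes precisely $s \ge 1$, which is exactly the range assumed in the corollary. Since $\delta = 1/2 \in (0,1)$ is admissible, Theorem \ref{mainresult}$(i)$ applies verbatim for every such $s$.

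Next I would substitute $\delta = 1/2$ into the three exponents and simplify. For $p$ one computes
\begin{equation}\label{pcheck}
p(s,1/2) = \frac{2s}{1 + 2\cdot \tfrac12} - 1 = \frac{2s}{2} - 1 = s - 1,
\end{equation}
which produces the power $s-1$ on the $L^2$--norm. For the numerator exponents,
\begin{equation}\label{qrcheck}
q(s,1/2) = \frac{(2 - \tfrac12)s}{2} = \frac{3s}{4}, \qquad r(s,1/2) = \frac{s \cdot \tfrac12}{2} = \frac{s}{4},
\end{equation}
giving $\lambda^{3s/4}$ and $(T^*-t)^{-s/4}$ respectively, exactly as claimed in \eqref{consequencia1}.

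Finally I would fold the dependence on the now-fixed parameter $\delta = 1/2$ into the constant, writing $c(s,\sigma) := c(s, 1/2, \sigma)$, so that the estimate \eqref{i} becomes
\begin{equation}\label{finalcheck}
\norm{\big(u,b\big)(t)}_{\dot{H}^s} \norm{\big(u,b\big)(t)}_{L^2}^{s-1} \ge \frac{c(s,\sigma)\lambda^{3s/4}}{(T^*-t)^{s/4}}, \qquad t \in [0,T^*),
\end{equation}
which is precisely \eqref{consequencia1}. There is essentially no obstacle in this argument: the only thing to verify carefully is the bookkeeping on the constraint $s \ge 1/2 + \delta$, to confirm that the specialization $\delta = 1/2$ is legitimate on the whole asserted range $s \ge 1$ rather than on a strictly smaller interval. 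The entire content of the corollary is thus the observation that $\delta = 1/2$ is the value of the free parameter that replaces the mixed exponent $p(s,\delta)$ by the clean weight $s-1$.
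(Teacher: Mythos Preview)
Your proposal is correct and follows exactly the paper's approach: the paper's proof is the single line ``Follows from Theorem \ref{mainresult}$(i)$, by choosing $\delta = 1/2$,'' and you have simply written out the verification of the exponents and the range $s\ge 1$ explicitly.
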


\begin{proof}
Follows from Theorem \ref{mainresult}$(i)$, by chossing $\delta = 1/2$.
\end{proof}

Next, we obtain a version of Leray's Inequality \cite{Leray1934} for the MHD Equations:

\begin{corollary}\label{corolariooutro}
Let $u^0, b^0 \in H^{s_{0}}(R^3)$, with $s_{0} > 3/2$, be such that $\Div u^{0} = \Div b^{0} = 0$. Let $u$, $b \in C([0,T^*), H^s(\R^3))$ be the strong solution for the system \eqref{MHD2} defined in its maximal interval $[0, T^*)$. If $T^* < \infty$, then,
\begin{equation}
\|(\nabla u, \nabla b)(t)\|_{L^{2}(\R^{3})} \geq \dfrac{c \lambda ^{3/4}}{(T^{\ast}-t)^{1/4}}\hbox{ for all } t\in[0,T^*).
\end{equation}
\end{corollary}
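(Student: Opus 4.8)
The plan is to deduce this inequality directly from Corollary \ref{corolario1} by specializing to the endpoint $s = 1$. The key observation is that the homogeneous Sobolev norm of order one is nothing but the $L^2$-norm of the gradient, up to a harmless normalization constant fixed by the Fourier convention. Concretely, by Plancherel's theorem together with the Fourier characterization of the homogeneous norm,
\begin{equation*}
\norm{(\nabla u, \nabla b)(t)}_{L^2(\R^3)}^2 = \int_{\R^3} |\xi|^2\, \abs{\mathcal{F}(u,b)(t)(\xi)}^2 \, d\xi = \norm{(u,b)(t)}_{\dot{H}^1(\R^3)}^2,
\end{equation*}
so that $\norm{(\nabla u, \nabla b)(t)}_{L^2(\R^3)}$ and $\norm{(u,b)(t)}_{\dot{H}^1(\R^3)}$ agree up to a constant.

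First I would apply Corollary \ref{corolario1} with the admissible choice $s = 1$, which is permitted since the corollary covers every $s \ge 1$. Because the exponent on the $L^2$-factor in \eqref{consequencia1} is $s-1$, it vanishes at $s=1$, and the term $\norm{(u,b)(t)}_{L^2}^{s-1}$ collapses to $1$. Hence \eqref{consequencia1} reduces to
\begin{equation*}
\norm{(u,b)(t)}_{\dot{H}^1(\R^3)} \ge \frac{c(1,\sigma)\,\lambda^{3/4}}{(T^*-t)^{1/4}}, \qquad \text{for all } t \in [0,T^*).
\end{equation*}
Combining this with the identity from the previous display, and absorbing $c(1,\sigma)$ together with the Fourier normalization constant into a single constant $c$, yields the asserted bound.

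There is essentially no genuine obstacle here: the analytic content is entirely carried by Corollary \ref{corolario1}, and the corollary is, in effect, the $s=1$ case dressed in classical PDE notation. The only point requiring minimal care is bookkeeping of the Fourier-transform normalization, so that the passage between $\norm{(\nabla u, \nabla b)(t)}_{L^2}$ and $\norm{(u,b)(t)}_{\dot{H}^1}$ is performed with the correct multiplicative constant before it is swallowed into $c$. The resulting estimate is naturally read as Leray's inequality for the MHD system: it recovers the classical $(T^*-t)^{-1/4}$ lower bound on the gradient of the velocity known from the Navier--Stokes setting, now extended to include the magnetic field and with the explicit dependence on the diffusivity parameter $\lambda$ made manifest.
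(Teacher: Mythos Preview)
Your proposal is correct and follows exactly the paper's own approach: the paper's proof is the single line ``It suffices to take $s=1$ in the previous corollary,'' which is precisely what you do, together with the (implicit) identification $\norm{(\nabla u,\nabla b)}_{L^2}=\norm{(u,b)}_{\dot{H}^1}$ up to the Fourier normalization absorbed into $c$.
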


\begin{proof}
It suffices to take $s=1$ in the previous corollary.
\end{proof}

\begin{corollary}\label{corolario2}
Let  $u,b \in C([0,T^*), H^s(\R^3))$ be the strong solution for $(\ref{MHD2})$ defined in its maximal interval $[0, T^*)$. Let $u^0, b^0 \in H^{s_{0}}(\R^3)$, with $s_{0} > 3/2$ and $\Div u^{0} = \Div b^{0} = 0$. If $T^* < \infty$, then, for each $\frac{1}{2}<s<\frac{3}{2}$,
\begin{equation}
\norm{(u,b)(t)}_{\dot{H}^s( \R^3)}\geq\frac{c(s,\sigma)\lambda^{\frac{5}{4}-\frac{s}{2}}}{(T^*-t)^{\frac{s}{2}-\frac{1}{4}}}\hbox{ for all } t\in[0,T^*).
\end{equation}
\end{corollary}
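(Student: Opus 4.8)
The plan is to reduce the statement to a single application of Theorem~\ref{mainresult}$(i)$ with a carefully chosen value of $\delta$. The distinguishing feature of the target inequality is that no $L^2$ factor appears on the left-hand side, so the first step is to pick $\delta$ so that the exponent $p(s,\delta)$ in \eqref{i} vanishes. Setting $p(s,\delta)=\frac{2s}{1+2\delta}-1=0$ forces $1+2\delta=2s$, that is, $\delta=s-\tfrac12$.

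Next I would check that this choice is admissible. For $\tfrac12<s<\tfrac32$ we have $\delta=s-\tfrac12\in(0,1)$, which is exactly the range required by Theorem~\ref{mainresult}$(i)$. Moreover the hypothesis $s\ge\tfrac12+\delta$ is met here with equality, since $\tfrac12+\delta=s$; I would note that the theorem permits this boundary case, its statement reading ``for every $s\ge\tfrac12+\delta$''. With $p(s,\delta)=0$, the factor $\norm{(u,b)(t)}_{L^2}^{p(s,\delta)}$ equals $1$ and drops out of \eqref{i}. The constant $c(s,\delta,\sigma)$ of Theorem~\ref{mainresult}$(i)$ then becomes $c(s,s-\tfrac12,\sigma)$, a quantity depending only on $s$ and $\sigma$, matching the $c(s,\sigma)$ in the statement.

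Finally I would evaluate the two remaining exponents at $\delta=s-\tfrac12$, using $1+2\delta=2s$:
\begin{equation*}
q(s,\delta)=\frac{(2-\delta)s}{1+2\delta}=\frac{\big(2-(s-\tfrac12)\big)s}{2s}=\frac{5}{4}-\frac{s}{2},
\qquad
r(s,\delta)=\frac{s\delta}{1+2\delta}=\frac{s(s-\tfrac12)}{2s}=\frac{s}{2}-\frac{1}{4}.
\end{equation*}
Substituting these into \eqref{i} yields precisely the asserted bound. There is essentially no analytic obstacle in this argument: the only point requiring a moment's care is confirming that the limiting case $s=\tfrac12+\delta$ is genuinely covered by the hypotheses of Theorem~\ref{mainresult}$(i)$, so that the chosen $\delta$ is permissible uniformly across the whole interval $\tfrac12<s<\tfrac32$.
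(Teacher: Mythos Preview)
Your proposal is correct and follows exactly the paper's approach: the paper's proof consists of the single line ``This is Theorem~\ref{mainresult}$(i)$ with $s=\delta+\tfrac12$,'' which is precisely the choice $\delta=s-\tfrac12$ you make and verify. Your additional checks on admissibility and your explicit computation of $q(s,\delta)$ and $r(s,\delta)$ simply spell out the details the paper leaves implicit.
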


\begin{proof}
This is Theorem \ref{mainresult}$(i)$ with $s = \delta + 1/2$.
\end{proof}

\noindent \textbf{Structure of the paper.} In the next section, we fix the notation, and we state the results needed in the proof of Theorem \ref{mainresult}. The rest of the paper is concerned with the proof of Theorem \ref{mainresult}.

\section{Preliminaries}\label{prelims}

Here, we introduce some notations and state the results that we use in the rest of the paper.

If $u=(u_1,u_2,u_3)$ and $v=(v_1,v_2,v_3)$ are vector fields, the tensor product $u\otimes v$ is
\begin{equation*}
 u\otimes v:=(v_1u,v_2u,v_3u)
\end{equation*}
and its divergence is
\begin{equation*}
 \Div (u\otimes v)=\big(\Div(v_1 u), \Div(v_2 u), \Div(v_3 u)\big).
\end{equation*} It is straightforward to check that, if $u$ is divergence--free, then $$\Div (u\otimes u) = \sum_{i=1}^3 u_i\  \frac{\fr u}{\fr x_i} = (u \cdot \grad) u.$$

The Fourier Transform of $f$ is given by
\begin{equation*}
\mathcal F(f)(\xi) = \hat{f} (\xi) := \int_{\R^3} e^{-i \xi\cdot x} f(x) \ dx.
\end{equation*} We consider the Euclidean norm $\vert z \vert ^{2} = z \cdot z $ in $\mathbb{C}^3$.

For $s\in\mathbb{R}$, the Homogeneous Sobolev Space $\dot{H}^s(\R^3, \R^3)$ is the space of tempered distributions $f$ for which
\begin{equation*}
\Vert f \Vert _{\dot{H}^s} \stackrel{\operatorname{def}}{=} \sqrt{\int_{\R^3} |\xi|^{2s}\vert \hat{f}(\xi)\vert^2 \ d\xi } < +\infty.
\end{equation*} In this way, we note that if $u, b \in \dot{H}^s(\R^3, \R^3)$, we have
\begin{equation*}
\Vert (u,b) \Vert _{\dot{H}^s} = \sqrt{\Vert u \Vert _{\dot{H}^s}^{2}+\Vert b \Vert _{\dot{H}^s}^{2}}.
\end{equation*} The inner product in given by
\begin{equation}\label{innerproduct}
\langle f, g\rangle_{\dot{H}^s} \stackrel{\operatorname{def}}{=} \int_{\R^3} |\xi|^{2s} \hat{f}(\xi) \cdot {\hat{g} (\xi)} \ d\xi,
\end{equation}
where $\hat{f}\cdot \hat{g} = \sum_{i=1}^{3} \hat{f_{i}}\overline{\hat{g_{i}}}$. For Homogeneous Sobolev Spaces, we refer to the book \cite{bahouri_chemin_danchin}. For instance, the following basic interpolation inequality holds:

\begin{lemma}\label{interpolacao}
For $0 < s_0 \le s$, the space $L^2 \cap \dot{H}^{s}$ is a subset of $\dot{H}^{s_0}$, and we have
\[
\norm{f}_{\dot{H}^{s_0}} \le \norm{f}^{1 - s_0/s}_{L^2} \norm{f}_{\dot{H}^{s}}^{s_0/s}.
\]
\end{lemma}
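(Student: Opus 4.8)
The plan is to prove the inequality directly on the Fourier side, where all three norms appear as weighted $L^2$ integrals, and then to invoke H\"older's inequality with a carefully chosen pair of conjugate exponents. Since the case $s_0 = s$ reduces to an identity, I assume $0 < s_0 < s$ throughout.

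First I would rewrite the integrand defining $\norm{f}_{\dot{H}^{s_0}}^2$ by distributing the weight $|\xi|^{2s_0}$ between a weightless factor and the full $\dot{H}^s$ weight:
\[
|\xi|^{2s_0}|\hat{f}(\xi)|^2 = \big(|\hat{f}(\xi)|^2\big)^{1 - s_0/s}\,\big(|\xi|^{2s}|\hat{f}(\xi)|^2\big)^{s_0/s},
\]
which is verified simply by summing the exponents of $|\hat{f}(\xi)|$ and of $|\xi|$. The first factor carries no $\xi$-weight and will eventually produce the $L^2$ norm, while the second carries precisely the weight of $\norm{f}_{\dot{H}^s}^2$.

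Next I would apply H\"older's inequality to the integral of this product, using the conjugate exponents $p = s/(s-s_0)$ and $q = s/s_0$, for which $1/p + 1/q = 1$ and $p, q > 1$. These exponents are chosen exactly so that $(1 - s_0/s)\,p = 1$ and $(s_0/s)\,q = 1$; as a result the two H\"older factors collapse to $\int_{\R^3} |\hat{f}|^2\,d\xi$ and $\int_{\R^3} |\xi|^{2s}|\hat{f}|^2\,d\xi = \norm{f}_{\dot{H}^s}^2$, respectively. This gives
\[
\norm{f}_{\dot{H}^{s_0}}^2 \le \Big(\int_{\R^3}|\hat{f}|^2\,d\xi\Big)^{(s-s_0)/s}\,\norm{f}_{\dot{H}^s}^{2 s_0/s}.
\]
Finally, Plancherel's identity identifies $\int_{\R^3}|\hat{f}|^2\,d\xi$ with $\norm{f}_{L^2}^2$ (up to the normalization constant of the chosen Fourier transform), and taking square roots yields the stated bound with the exponents $1 - s_0/s$ and $s_0/s$. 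The inclusion $L^2 \cap \dot{H}^s \subset \dot{H}^{s_0}$ is then immediate, since the right-hand side is finite whenever $f \in L^2 \cap \dot{H}^s$.

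I do not expect a genuine obstacle here: the entire content is the algebraic bookkeeping of the exponent splitting together with the verification of the conjugacy relation $1/p + 1/q = 1$, which is the step most prone to computational slips. The only points deserving a word of care are the degenerate endpoint $s_0 = s$ (formally $p = \infty$), which is disposed of separately as an equality, and the Fourier-normalization constant relating $\int_{\R^3}|\hat{f}|^2\,d\xi$ to $\norm{f}_{L^2}^2$, which is harmless for all subsequent applications since it is absorbed into the dimensional constants.
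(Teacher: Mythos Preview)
Your argument is correct: the splitting $|\xi|^{2s_0}|\hat f|^2 = (|\hat f|^2)^{1-s_0/s}(|\xi|^{2s}|\hat f|^2)^{s_0/s}$ together with H\"older for the conjugate pair $p=s/(s-s_0)$, $q=s/s_0$ is exactly the standard proof, and your caveats about the endpoint $s_0=s$ and the Plancherel normalization constant are appropriate. The paper itself does not give a proof here but simply cites \cite[Proposition~1.32]{bahouri_chemin_danchin}; note, however, that your computation is precisely the technique the authors carry out in their proof of the neighboring Lemma~\ref{interpolacao_zingano}, so your approach is entirely in line with the paper's methods.
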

\begin{proof}
This is a particular case of \cite[Proposition 1.32]{bahouri_chemin_danchin}.
\end{proof}

Next, we state a version of what is known as Chemin's Lemma:

\begin{lemma}\label{lema_zingano}
Fix $\delta \in (0,1)$, let $\eta = 1/2 + \delta < 3/2$ and $\eta'= s+1-\delta <3/2$. Then there exists a constant $C(s,\delta) >0$ such that, for every $f,g \in \dot{H}^\eta(\R^3) \cap \dot{H}^{\eta'}(\R^3)$, \[\norm{fg}_{\dot{H}^s} \le C(s,\delta) \norm{f}_{\dot{H}^\eta}\norm{g}_{\dot{H}^{\eta'}}.\]
\end{lemma}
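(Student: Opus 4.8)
The plan is to move everything to the Fourier side and reduce the product estimate to a single application of the Cauchy--Schwarz inequality together with a classical Riesz-type integral computation. With the convention fixed above, the product becomes a convolution, $\widehat{fg}(\xi) = (2\pi)^{-3}(\hat f \ast \hat g)(\xi)$, so that
\[
|\widehat{fg}(\xi)| \le (2\pi)^{-3}\int_{\R^3}|\hat f(\xi-\zeta)|\,|\hat g(\zeta)|\,d\zeta.
\]
To bring in the hypotheses $f\in\dot H^{\eta}$ and $g\in\dot H^{\eta'}$, I would introduce the nonnegative $L^2$ functions $F(\xi):=|\xi|^{\eta}|\hat f(\xi)|$ and $G(\xi):=|\xi|^{\eta'}|\hat g(\xi)|$, which satisfy $\norm{F}_{L^2}=\norm{f}_{\dot H^{\eta}}$ and $\norm{G}_{L^2}=\norm{g}_{\dot H^{\eta'}}$. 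Rewriting the convolution and applying Cauchy--Schwarz in $\zeta$, splitting the integrand as $[F(\xi-\zeta)G(\zeta)]\cdot[|\xi-\zeta|^{-\eta}|\zeta|^{-\eta'}]$, yields the pointwise bound
\[
|\xi|^{s}\,|\widehat{fg}(\xi)| \le (2\pi)^{-3}|\xi|^{s}\Big(\int_{\R^3}F(\xi-\zeta)^2 G(\zeta)^2\,d\zeta\Big)^{1/2}\Big(\int_{\R^3}\frac{d\zeta}{|\xi-\zeta|^{2\eta}|\zeta|^{2\eta'}}\Big)^{1/2}.
\]

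The decisive step is to evaluate the second factor, and this is exactly where the structural hypotheses are consumed. Since $2\eta<3$ and $2\eta'<3$, the kernel is locally integrable at its two singular points $\zeta=\xi$ and $\zeta=0$, while the scaling identity $\eta+\eta'=s+\tfrac32$ gives $2\eta+2\eta'=2s+3>3$ (using $s>0$), which is precisely what is needed for convergence at infinity. The classical Riesz composition formula then produces
\[
\int_{\R^3}\frac{d\zeta}{|\xi-\zeta|^{2\eta}|\zeta|^{2\eta'}} = C(\eta,\eta')\,|\xi|^{\,3-2\eta-2\eta'} = C(\eta,\eta')\,|\xi|^{-2s},
\]
so the prefactor $|\xi|^{s}$ is exactly annihilated and we are left with $|\xi|^{s}|\widehat{fg}(\xi)|\le C(s,\delta)\big(\int F(\xi-\zeta)^2G(\zeta)^2\,d\zeta\big)^{1/2}$.

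To finish, I would square and integrate in $\xi$; by Tonelli's theorem and translation invariance,
\[
\norm{fg}_{\dot H^{s}}^{2} \le C(s,\delta)^{2}\int_{\R^3}\!\!\int_{\R^3}F(\xi-\zeta)^2 G(\zeta)^2\,d\zeta\,d\xi = C(s,\delta)^{2}\norm{F}_{L^2}^{2}\norm{G}_{L^2}^{2},
\]
and taking square roots gives the claimed inequality. The one genuinely delicate point --- and the step I expect to demand the most care --- is the Riesz integral identity and its convergence analysis, since it is there that all three constraints ($\eta<3/2$, $\eta'<3/2$, and the scaling $\eta+\eta'=s+3/2$ with $s>0$) are simultaneously used; the remaining manipulations are routine. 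One can establish the identity by a scaling argument reducing it to a finite Beta-type integral, or simply cite it from a standard reference. The hypothesis $s>0$ (equivalently $\eta+\eta'>3/2$) is what is needed here, and it holds throughout the regime in which the lemma is applied.
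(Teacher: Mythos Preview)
Your argument is correct and is, in fact, the standard Fourier--side proof of the product law $\dot H^{\eta}\cdot\dot H^{\eta'}\hookrightarrow\dot H^{\eta+\eta'-3/2}$ under the constraints $\eta,\eta'<3/2$ and $\eta+\eta'>3/2$. The paper, by contrast, does not prove anything: it simply observes that $\eta+\eta'=s+3/2$ and invokes Chemin's Lemma as stated in \cite[Lemma~3.1]{B2010}. So the difference is purely one of packaging---you supply a self-contained derivation of exactly the inequality the paper cites as a black box. Your route has the advantage of making transparent precisely where each hypothesis is consumed (local integrability at the two poles uses $\eta,\eta'<3/2$; decay at infinity uses $\eta+\eta'>3/2$, i.e.\ $s>0$), at the cost of reproving a standard result. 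One small point worth flagging: the lemma as written does not state $s>0$ explicitly, and you correctly note that your Riesz integral needs it; this is harmless in the paper's applications, where $s\ge 1/2+\delta>0$ throughout, but it is a genuine hidden hypothesis.
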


\begin{proof}
Observe $\eta + \eta' = s + 3/2$ and apply Chemin's Lemma \cite[Lemma 3.1]{B2010}.
\end{proof}

Now, we state further interpolation inequalities. For convenience, we present their short proofs.

\begin{lemma}\label{interpolacao_zingano}
Let $\theta$ be a tempered distribution. Then, for $0<\delta<1$ and $s\geq 1/2+\delta$, we have
\[
  \Vert \theta\Vert_{\dot{H}^{s+1-\delta}}\leq\Vert\theta\Vert_{\dot{H}^s}^{\delta}\Vert\nabla\theta\Vert_{\dot{H}^s}^{1-\delta}
 \]
\end{lemma}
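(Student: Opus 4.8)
The plan is to pass everything to the Fourier side and recognize the claimed bound as a one-line convexity (Hölder) interpolation between the homogeneous norms $\dot H^s$ and $\dot H^{s+1}$. First I would use that differentiation is multiplication by $i\xi$ on the Fourier side: since $\widehat{\nabla\theta}(\xi) = i\xi\,\hat\theta(\xi)$, we have $|\widehat{\nabla\theta}(\xi)|^2 = |\xi|^2|\hat\theta(\xi)|^2$, and integrating against $|\xi|^{2s}$ gives $\|\nabla\theta\|_{\dot H^s} = \|\theta\|_{\dot H^{s+1}}$. Thus the inequality to prove is equivalent to the interpolation estimate
\[
\|\theta\|_{\dot H^{s+1-\delta}} \le \|\theta\|_{\dot H^s}^{\delta}\,\|\theta\|_{\dot H^{s+1}}^{1-\delta}.
\]

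The key algebraic observation is that the target index is the convex combination $s+1-\delta = \delta\, s + (1-\delta)(s+1)$, with weights $\delta$ and $1-\delta$. Accordingly I would factor the integrand defining $\|\theta\|_{\dot H^{s+1-\delta}}^2$ as
\[
|\xi|^{2(s+1-\delta)}|\hat\theta(\xi)|^2 = \big(|\xi|^{2s}|\hat\theta(\xi)|^2\big)^{\delta}\big(|\xi|^{2(s+1)}|\hat\theta(\xi)|^2\big)^{1-\delta},
\]
verifying that the exponents of $|\xi|$ indeed sum to $2(s+1-\delta)$ and that the power of $|\hat\theta|^2$ is $\delta+(1-\delta)=1$. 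Then Hölder's inequality with conjugate exponents $1/\delta$ and $1/(1-\delta)$ yields
\[
\|\theta\|_{\dot H^{s+1-\delta}}^2 \le \Big(\int_{\R^3}|\xi|^{2s}|\hat\theta|^2\,d\xi\Big)^{\delta}\Big(\int_{\R^3}|\xi|^{2(s+1)}|\hat\theta|^2\,d\xi\Big)^{1-\delta} = \|\theta\|_{\dot H^s}^{2\delta}\,\|\theta\|_{\dot H^{s+1}}^{2(1-\delta)},
\]
and taking square roots finishes the argument.

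Alternatively, one can avoid invoking Hölder directly and instead reduce to Lemma \ref{interpolacao}: setting $\psi$ through $\hat\psi(\xi)=|\xi|^s\hat\theta(\xi)$ gives $\|\psi\|_{\dot H^{\alpha}}=\|\theta\|_{\dot H^{s+\alpha}}$ for every $\alpha$ (with $\|\psi\|_{L^2}=\|\theta\|_{\dot H^s}$), so applying that lemma with top index $1$ and intermediate index $1-\delta$ produces the claim after translating back. I do not expect any genuine obstacle: the estimate is a plain two-point interpolation and needs no restriction on $s$ or $\delta$ beyond $0<\delta<1$ (so that the Hölder exponents are finite and positive). The hypotheses $s\ge 1/2+\delta$ and $\delta\in(0,1)$ are merely the ranges in which the lemma is later applied, and the only real care required is the bookkeeping of exponents.
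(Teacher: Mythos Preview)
Your proof is correct and essentially identical to the paper's: both pass to the Fourier side, use $\widehat{\nabla\theta}(\xi)=i\xi\,\hat\theta(\xi)$, and apply H\"older with exponents $1/\delta$ and $1/(1-\delta)$ to the factorization $|\xi|^{2(s+1-\delta)}|\hat\theta|^2 = \big(|\xi|^{2s}|\hat\theta|^2\big)^\delta\big(|\xi|^{2(s+1)}|\hat\theta|^2\big)^{1-\delta}$. Your observation that the hypothesis $s\ge 1/2+\delta$ is not needed for this lemma itself is also correct.
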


\begin{proof}
Since $\widehat{\grad \theta}(\xi) = i \xi\  \hat{\theta}(\xi)$, the proof is a simple application of Hölder's inequality:
\[
\begin{split}
\int_{\R^3} |\xi|^{2(s+1-\delta)} |\hat{\theta}(\xi)|^2 \ d\xi & = \int_{\R^3} |\xi|^{2s\delta} \hat{\theta}(\xi) \cdot |\xi|^{2(s+1)(1-\delta)} \hat{\theta}(\xi) \ d\xi \\
       & \le \bigg(\int_{\R^3} |\xi|^{2s} |\hat{\theta}(\xi)|^2 \ d\xi \bigg)^{\delta}  \bigg(\int_{\R^3}  |\xi|^{2(s+1)} |\hat{\theta}(\xi)|^2 \ d\xi \bigg)^{1-\delta}.
\end{split}
\]\end{proof}

\begin{lemma}\label{ZinganoA8} Let $\theta$ be a tempered distribution and $s > 3/2$. Then,
$$\| \hat{\theta}\|_{L^{1}} \leq C(s)\| \theta\|_{L^{2}}^{1-3/2s}\| \theta\|_{\dot{H}^{s}}^{3/2s}$$
\end{lemma}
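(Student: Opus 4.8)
The plan is to split the frequency integral defining $\|\hat{\theta}\|_{L^1}$ at a radius $R>0$ into low and high frequencies, estimate each piece by Cauchy--Schwarz, and then optimize in $R$. First I would write
\[
\|\hat{\theta}\|_{L^1} = \int_{|\xi|\le R} |\hat{\theta}(\xi)|\, d\xi + \int_{|\xi| > R} |\hat{\theta}(\xi)|\, d\xi,
\]
so that each term can be matched, respectively, to the $L^2$--norm and the $\dot{H}^s$--norm.

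For the low-frequency part, Cauchy--Schwarz together with Plancherel's identity (up to a dimensional constant) gives
\[
\int_{|\xi|\le R} |\hat{\theta}(\xi)|\, d\xi \le \Big(\int_{|\xi|\le R} d\xi\Big)^{1/2} \Big(\int_{\R^3}|\hat{\theta}(\xi)|^2\, d\xi\Big)^{1/2} \le C\, R^{3/2}\, \|\theta\|_{L^2},
\]
since the ball of radius $R$ in $\R^3$ has volume proportional to $R^3$. For the high-frequency part I would insert the weight $|\xi|^s$ and again apply Cauchy--Schwarz:
\[
\int_{|\xi|>R}|\hat{\theta}(\xi)|\, d\xi \le \Big(\int_{|\xi|>R}|\xi|^{-2s}\, d\xi\Big)^{1/2}\Big(\int_{|\xi|>R}|\xi|^{2s}|\hat{\theta}(\xi)|^2\, d\xi\Big)^{1/2}.
\]
Here the hypothesis $s>3/2$ is essential: it ensures $2s>3$, so that in polar coordinates the radial integral $\int_R^\infty r^{2-2s}\, dr$ converges and equals $C(s)\, R^{3-2s}$. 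Thus this term is bounded by $C(s)\, R^{3/2-s}\|\theta\|_{\dot{H}^s}$.

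Combining the two bounds yields
\[
\|\hat{\theta}\|_{L^1} \le C\, R^{3/2}\|\theta\|_{L^2} + C(s)\, R^{3/2-s}\|\theta\|_{\dot{H}^s}.
\]
Since $3/2-s<0$, the first term increases and the second decreases in $R$, so I would balance them by choosing $R = \big(\|\theta\|_{\dot{H}^s}/\|\theta\|_{L^2}\big)^{1/s}$; this makes both terms comparable to $\|\theta\|_{L^2}^{1-3/(2s)}\|\theta\|_{\dot{H}^s}^{3/(2s)}$, which gives the claim. The only delicate point --- the ``hard part,'' such as it is --- is the convergence of the tail integral $\int_{|\xi|>R}|\xi|^{-2s}\, d\xi$, which is precisely where the restriction $s>3/2$ enters; everything else is a routine Cauchy--Schwarz estimate followed by the optimization in $R$.
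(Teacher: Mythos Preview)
Your proof is correct and is essentially the same as the paper's. The paper derives it from the inhomogeneous bound $\|\hat{\theta}\|_{L^1}\le C(s)\|\theta\|_{H^s}$ (equation \eqref{L1}) together with a scaling argument, which after applying to $\theta(\lambda\,\cdot)$ and optimizing in $\lambda$ is exactly your frequency splitting at radius $R\sim\lambda^{-1}$ followed by optimization; the role of $s>3/2$ (integrability of $(1+|\xi|^2)^{-s}$, equivalently of $|\xi|^{-2s}$ at infinity) is the same in both.
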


\begin{proof}
This follows from \eqref{L1} and a scaling argument.
\end{proof}

\section{Proof of Theorem \ref{mainresult}}\label{main_proof}

In this section, we prove Theorem \ref{mainresult}.

\begin{proof}[Proof of $(i)$]
Since $\Div u = \Div b =0$, by taking the $\dot{H}^s$--scalar product of the first equation in (\ref{MHD2}) with $u$, we obtain
\begin{equation}\label{passo1}
\frac{1}{2}\partial_t\norm{u}^2_{\dot{H}^s}+\nu\norm{\nabla u}^2_{\dot{H}^s} - Re[\langle u\otimes u, \grad u\rangle_{\dot{H}^s}] + Re[\sigma \langle  b\otimes b, \grad u\rangle_{\dot{H}^s}]=0,
\end{equation} where $Re[z]$ denotes the real part of the complex number $z$. Then,
\begin{eqnarray}\label{inicial}
\frac{1}{2}\partial_t\norm{u}^2_{\dot{H}^s}+\nu\norm{\nabla u}^2_{\dot{H}^s}&=& Re[ \langle u\otimes u  - \sigma  ( b\otimes b), \grad u\rangle_{\dot{H}^s}] \nonumber\\
&\leq& \norm{u\otimes u-\sigma (b\otimes b)}_{\dot{H}^s}\norm{\nabla u}_{\dot{H}^s}\\
&\leq& \norm{u\otimes u}_{\dot{H}^s}\norm{\nabla u}_{\dot{H}^s}+\sigma\norm{b\otimes b}_{\dot{H}^s}\norm{\nabla u}_{\dot{H}^s}. \nonumber
\end{eqnarray}

\noindent Now, we apply Lemma \ref{lema_zingano} and Lemma \ref{interpolacao_zingano} to the right hand side of \eqref{inicial} to obtain
\begin{equation}\label{inicialu}
\begin{split}
\frac{1}{2}\partial_t\norm{u}^2_{\dot{H}^s}  & +\nu\norm{\nabla u}^2_{\dot{H}^s} \leq  c(s,\delta) \Big[\norm{u}_{\dot{H}^{1/2 + \delta}}\norm{u}_{\dot{H}^{s+ 1 - \delta}}  +  \sigma\norm{b}_{\dot{H}^{1/2 + \delta}}\norm{b}_{\dot{H}^{s+ 1 - \delta}}\Big]\norm{\nabla u}_{\dot{H}^s} \\
                      & \le  c(s,\delta,\sigma) \Big[\norm{u}_{\dot{H}^{1/2 + \delta}}\norm{u}_{\dot{H}^{s}}^\delta \norm{\grad u}_{\dot{H}^{s}}^{2-\delta} + \norm{b}_{\dot{H}^{1/2 + \delta}}\norm{b}_{\dot{H}^{s}}^\delta \norm{\grad b}_{\dot{H}^{s}}^{1-\delta}\norm{\nabla u}_{\dot{H}^s}\Big].
\end{split}
\end{equation}

\noindent Thus, Young's Inequality implies
\begin{equation}\label{estimativa_u}
\begin{split}
\partial_t\norm{u}^2_{\dot{H}^s} + \nu \norm{\nabla u}^2_{\dot{H}^s} &  \leq c(s,\delta,\sigma) \Big[ \nu^{-\frac{2-\delta}{\delta}} \norm{u}^{2/\delta}_{\dot{H}^{1/2 + \delta}}\norm{u}^{2}_{\dot{H}^s}  + \nu^{-1} \norm{b}^2_{\dot{H}^{1/2 + \delta}}\norm{b}_{\dot{H}^{s}}^{2\delta} \norm{\grad b}_{\dot{H}^{s}}^{2-2\delta} \Big].
\end{split}
\end{equation}

\noindent From the second equation of the system \eqref{MHD2}, we obtain an analogous estimate:
\begin{equation}\label{estimativa_b}
\begin{split}
\partial_t\norm{b}^2_{\dot{H}^s}+\nu_m\norm{\nabla b}^2_{\dot{H}^s}  & \leq  c(s, \delta) \bigg[ \nu_m^{-1} \norm{b}^2_{\dot{H}^{1/2 + \delta}}\norm{u}_{\dot{H}^{s}}^{2\delta} \norm{\grad u}_{\dot{H}^{s}}^{2-2\delta} + \nu_m^{-\frac{2-\delta}{\delta}} \norm{u}^{2/\delta}_{\dot{H}^{1/2 + \delta}}\norm{b}_{\dot{H}^{s}}^2 \bigg]
\end{split}
\end{equation}

\noindent Now, we set $\lambda = \min\{ \nu, \nu_m\}$ and add \eqref{estimativa_u} and \eqref{estimativa_b}. We obtain
\[
\begin{split}
\partial_t\norm{(u,b)}^2_{\dot{H}^s} + \lambda \norm{(\grad u, \nabla b)}^2_{\dot{H}^s}
            & \le c(s, \delta,\sigma) \bigg[ \lambda^{-\frac{2-\delta}{\delta}} \norm{u}^{2/\delta}_{\dot{H}^{1/2 + \delta}} \big( \norm{u}^{2}_{\dot{H}^s} + \norm{b}^{2}_{\dot{H}^s} \big)  \\
            & \quad + \lambda^{-1} \norm{b}^2_{\dot{H}^{1/2 + \delta}}\norm{b}_{\dot{H}^{s}}^{2\delta} \norm{\grad b}_{\dot{H}^{s}}^{2-2\delta} + \lambda^{-1} \norm{b}^2_{\dot{H}^{1/2 + \delta}}\norm{u}_{\dot{H}^{s}}^{2\delta} \norm{\grad u}_{\dot{H}^{s}}^{2-2\delta} \bigg].
\end{split}
\]

By Young's inequality again, it follows that
\[
\begin{split}
\partial_t\norm{(u,b)}^2_{\dot{H}^s} + \frac{\lambda}{2} \norm{(\grad u, \nabla b)}^2_{\dot{H}^s} & \le  c(s, \delta,\sigma) \lambda^{-\frac{2-\delta}{\delta}} \bigg[ \norm{u}^{2/\delta}_{\dot{H}^{1/2 + \delta}} \norm{(u,b)}^{2}_{\dot{H}^s}  + \norm{b}^{2/\delta}_{\dot{H}^{1/2 + \delta}} \norm{(u,b)}^{2}_{\dot{H}^s} \bigg]  \\
            & = c(s, \delta,\sigma) \lambda^{-\frac{2-\delta}{\delta}} \Big[ \norm{u}^{2/\delta}_{\dot{H}^{1/2 + \delta}} + \norm{b}^{2/\delta}_{\dot{H}^{1/2 + \delta}} \Big] \norm{(u,b)}^{2}_{\dot{H}^s}.
\end{split}
\] In particular,
\[
\partial_t\norm{(u,b)}^2_{\dot{H}^s} \le c(s, \delta,\sigma) \lambda^{-\frac{2-\delta}{\delta}} \Big[ \norm{u}^{2/\delta}_{\dot{H}^{1/2 + \delta}} + \norm{b}^{2/\delta}_{\dot{H}^{1/2 + \delta}} \Big] \norm{(u,b)}^{2}_{\dot{H}^s}.
\] Thus, by Gronwall's Inequality with $0\leq a \leq t<T^*$,
\begin{equation}\label{nova1}
\norm{\big(u,b\big)(t)}^2_{\dot{H}^s}  \leq \norm{\big(u,b\big)(a)}^2_{\dot{H}^s} \exp \bigg( c(s, \delta,\sigma) \lambda^{-\frac{2-\delta}{\delta}}  \int_a^t \Big[ \norm{u(\tau)}^{2/\delta}_{\dot{H}^{1/2 + \delta}} + \norm{b(\tau)}^{2/\delta}_{\dot{H}^{1/2 + \delta}} \Big] \ d\tau \bigg).
\end{equation}

\noindent Set $\theta : = (1 + 2\delta)/2s.$ By Lemma \ref{interpolacao}, whenever $s \ge 1/2 + \delta$ then
\begin{equation}\label{lemma2.3}
\norm{f}_{\dot{H}^{1/2 + \delta}} \le \norm{f}^{1 - \theta}_{L^2} \norm{f}_{\dot{H}^{s}}^{\theta}.
\end{equation}

\noindent Estimate \eqref{lemma2.3}, applied to $u$ and $b$, yields
\[
\begin{split}
\norm{u}^{2/\delta}_{\dot{H}^{1/2 + \delta}} + \norm{b}^{2/\delta}_{\dot{H}^{1/2 + \delta}} & \le 2^{1/\delta} \big( \norm{u}^{2}_{\dot{H}^{1/2 + \delta}} + \norm{b}^{2}_{\dot{H}^{1/2 + \delta}} \big)^{1/\delta} \\
           & \le 2^{1/\delta} \Big[ \norm{u}^{2 - 2\theta}_{L^2} \norm{u}_{\dot{H}^{s}}^{2\theta} + \norm{b}^{2 - 2\theta}_{L^2} \norm{b}_{\dot{H}^{s}}^{2\theta} \Big]^{1/\delta} \\
           & \le 2^{1/\delta} \Big[ \big( \norm{u}^{2 - 2\theta}_{L^2} + \norm{b}^{2 - 2\theta}_{L^2} \big) \norm{u}_{\dot{H}^{s}}^{2\theta} + \big( \norm{u}^{2 - 2\theta}_{L^2} + \norm{b}^{2 - 2\theta}_{L^2} \big) \norm{b}_{\dot{H}^{s}}^{2\theta} \Big]^{1/\delta}  \\
           & = 2^{1/\delta} \Big[ \norm{u}^{2 - 2\theta}_{L^2} + \norm{b}^{2 - 2\theta}_{L^2} \Big]^{1/\delta} \Big[ \norm{u}_{\dot{H}^{s}}^{2\theta} + \norm{b}_{\dot{H}^{s}}^{2\theta} \Big]^{1/\delta} \\
           & \le 2^{1/\delta} \Big[ \norm{u}^{2}_{L^2} + \norm{b}^{2}_{L^2} \Big]^{(1-\theta)/\delta} \Big[ \norm{u}_{\dot{H}^{s}}^{2} + \norm{b}_{\dot{H}^{s}}^{2} \Big]^{\theta/\delta}
\end{split}
\]

\noindent This and (\ref{nova1}) imply
\[
\begin{split}
\norm{u(t)}^{2/\delta}_{\dot{H}^{1/2 + \delta}} + \norm{b(t)}^{2/\delta}_{\dot{H}^{1/2 + \delta}} & \le 2^{1/\delta} \norm{\big(u,b\big)(t)}^{(2-2\theta)/\delta}_{L^2} \norm{\big(u,b\big)(a)}^{2\theta/\delta}_{\dot{H}^s} \times \\
        & \quad \times \exp \bigg( c(s, \delta,\sigma) \lambda^{-\frac{2-\delta}{\delta}}  \int_a^t \Big[ \norm{u(\tau)}^{2/\delta}_{\dot{H}^{1/2 + \delta}} + \norm{b(\tau)}^{2/\delta}_{\dot{H}^{1/2 + \delta}} \Big] \ d\tau \bigg)
\end{split}
\]
Then, by using the decay of the $L^2$--norm, we obtain
\[
\begin{split}
f(t)\exp \bigg(- c(s, \delta,\sigma) \lambda^{-\frac{2-\delta}{\delta}}  \int_a^t f(\tau) \ d\tau \bigg) & \le 2^{1/\delta} \norm{\big(u,b\big)(a)}_{L^2}^{(2-2\theta)/\delta} \norm{\big(u,b\big)(a)}^{2\theta/\delta}_{\dot{H}^s}
\end{split}
\] where \[f(t) := \norm{u(t)}^{2/\delta}_{\dot{H}^{1/2 + \delta}} + \norm{b(t)}^{2/\delta}_{\dot{H}^{1/2 + \delta}}
\] Now, we integrate on $[a, T]$, with $T<T^*$:
\[
\begin{split}
\int_a^T f(t)  \exp \bigg(- c(s, \delta,\sigma) \lambda^{-\frac{2-\delta}{\delta}} & \int_a^t f(\tau) \ d\tau \bigg) \ dt \le 2^{1/\delta} \norm{\big(u,b\big)(a)}_{L^2}^{(2-2\theta)/\delta} \norm{\big(u,b\big)(a)}^{2\theta/\delta}_{\dot{H}^s} (T-a);
\end{split}
\] thus,
\[
\begin{split}
\frac{-\lambda^{\frac{2-\delta}{\delta}}}{c(s, \delta,\sigma)} \exp \bigg(- c(s, \delta,\sigma) \lambda^{-\frac{2-\delta}{\delta}} & \int_a^t f(\tau) \ d\tau \bigg) \bigg|_{t=a}^{t=T} \le 2^{1/\delta} \norm{\big(u,b\big)(a)}_{L^2}^{(2-2\theta)/\delta} \norm{\big(u,b\big)(a)}^{2\theta/\delta}_{\dot{H}^s} (T-a),
\end{split}
\] or,
\[
\begin{split}
1-\exp \bigg(- c(s, \delta,\sigma) \lambda^{-\frac{2-\delta}{\delta}} & \int_a^T f(\tau) \ d\tau \bigg) \le c(s, \delta,\sigma) \lambda^{-\frac{2-\delta}{\delta}} \norm{\big(u,b\big)(a)}_{L^2}^{(2-2\theta)/\delta} \norm{\big(u,b\big)(a)}^{2\theta/\delta}_{\dot{H}^s} (T-a).
\end{split}
\] Let $T \to T^*$ and use \eqref{nova1}: 
\[
\begin{split}
1  \le c(s, \delta,\sigma) \lambda^{-\frac{2-\delta}{\delta}} \norm{\big(u,b\big)(a)}_{L^2}^{(2-2\theta)/\delta} \norm{\big(u,b\big)(a)}^{2\theta/\delta}_{\dot{H}^s} (T^*-a).
\end{split}
\]

Hence,
\begin{equation}\label{final_in}
\norm{\big(u,b\big)(a)}_{\dot{H}^s} \norm{\big(u,b\big)(a)}_{L^2}^{p(s,\delta)} \ge \frac{c(s, \delta,\sigma)\lambda^{q(s,\delta)}}{(T^*-a)^{r(s,\delta)}}.
\end{equation}
\end{proof}

\begin{proof}[Proof of $(ii)$]
Set $q= p + \frac{1}{2}\sigma |b|^2$. We apply the Fourier Transform to both sides of the first equation in \eqref{MHD2} to obtain
\begin{equation}
\fr_t \big( \mathcal{F}(u) \big)  + \mathcal{F}\big((u \cdot \grad) u\big) - \sigma \mathcal{F}\big((b \cdot \grad) b\big) = \nu \mathcal{F}(\Lap u) - \mathcal{F}(\grad q).
\end{equation} Since $\Div u = 0$, we have $\mathcal{F}(\grad q)\cdot\mathcal{F}(u)=0$. Thus,
\begin{equation}\label{fourier}
\frac{1}{2}\fr_t \Big( |\mathcal{F}(u)|^2  \Big)  + \nu|\xi|^2|\mathcal{F}(u)|^2 + Re[\mathcal{F}\big((u \cdot \grad) u\big) \cdot \mathcal{F}(u)] - Re[\sigma  \mathcal{F}\big((b \cdot \grad) b\big)\cdot \mathcal{F}(u) ] =0.
 \end{equation} Observe that, for any fixed $\eps>0,$
\begin{equation*}
\frac{1}{2}\ \fr_t \Big( |\mathcal{F}(u)|^2  \Big) = \sqrt{ |\mathcal{F}(u)|^2+\varepsilon} \ \ \fr_t \Big( \sqrt{|\mathcal{F}(u)|^2+\varepsilon}\ \Big).
\end{equation*} This and \eqref{fourier} imply
\begin{equation}
\fr_t \Big(\sqrt{|\mathcal{F}(u)|^2 + \epsilon} \ \Big) + \frac{\nu|\xi|^2|\mathcal{F}(u)|^2 }{\sqrt{|\mathcal{F}(u)|^2 + \epsilon}} +  \frac{Re[\mathcal{F}\big((u\cdot\grad) u\big) \cdot \mathcal{F}(u)]}{\sqrt{|\mathcal{F}(u)|^2 + \epsilon}} - \frac{Re[\sigma \mathcal{F}\big((b\cdot\grad)b\big) \cdot\mathcal{F}(u)]}{\sqrt{|\mathcal{F}(u)|^2 + \epsilon}} = 0.
\end{equation} Hence, by setting $\kappa= \max \{1,\sigma\}$, we have
\begin{equation}
\fr_t \Big(\sqrt{|\mathcal{F}(u)|^2 + \epsilon} \ \Big) + \frac{\nu|\xi|^2| \mathcal{F}(u)|^2}{\sqrt{|\mathcal{F}(u)|^2 + \epsilon}} \leq \kappa \Big[ \big|\mathcal{F}\big((u\cdot\grad)u\big) \big| + \big|\mathcal{F}\big((b\cdot\grad)b\big)\big| \Big].
\end{equation} Let $\varepsilon \to 0$:
\begin{equation}
 \fr_t\big(|\mathcal{F}(u)|\big) + \nu |\xi|^2 |\mathcal{F}(u)| \leq \kappa \Big[ \big|\mathcal{F}\big((u\cdot\grad)u\big) \big| + \big|\mathcal{F}\big((b\cdot\grad)b\big)\big| \Big].
 \end{equation} Now, integrate on $\R^3$ to obtain
\begin{equation*}
\begin{split}
 \fr_t\norm{\mathcal{F}(u)(t)}_{L^1}  + & \nu\norm{\mathcal{F}(\Delta u)(t)}_{L^1} \le \kappa \left[\int_{\mathbb{R}^3} \big|\mathcal{F}\big((u\cdot\grad)u\big)(t,\xi)\big| \ d\xi + \int_{\mathbb{R}^3} \big|\mathcal{F}\big((b\cdot\grad)b\big)(t,\xi) \big| \ d\xi\right] \\
    &  \le \frac{\kappa3\sqrt{3}}{(2\pi)^3} \Big[ \norm{\mathcal{F}(u)(t)}_{L^1}^{3/2} \norm{\mathcal{F}(\Delta u)(t)}_{L^1}^{1/2} + \norm{\mathcal{F}(b)(t)}_{L^1}^{3/2} \norm{\mathcal{F}(\Delta b)(t)}_{L^1}^{1/2} \Big].
\end{split}
\end{equation*}

\noindent Now, apply Young's Inequality twice to obtain
\begin{equation*}\label{28}
\begin{split}
\fr_t\norm{\mathcal{F}(u)(t)}_{L^1} + \frac{\nu}{2} \norm{\mathcal{F}(\Delta u)(t)}_{L^1} \le & \ \frac{1}{2} \left(\frac{\kappa3\sqrt{3}}{(2\pi)^3}\right)^2 \big[\nu^{-1}\norm{\mathcal{F}(u)(t)}_{L^1}^3+\nu_m^{-1} \norm{\mathcal{F}(b)(t)}_{L^1}^3\big]
 \\ &+ \frac{\nu_m}{2}\norm{\mathcal{F}(\Delta b)(t)}_{L^1}.
\end{split}
\end{equation*} Let $\lambda=\min\{\nu,\nu_m\}$. Then,
\begin{equation}\label{inequality_u}
\begin{split}
\fr_t \big( \norm{\mathcal{F}(u)(t)}_{L^1} \big) + \frac{\nu}{2} \norm{\mathcal{F}(\Delta u)(t)}_{L^1}
\leq & \frac{27\lambda^{-1}\kappa^2}{2(2\pi)^6} \big[\norm{\mathcal{F}(u)(t)}_{L^1}^3 + \norm{\mathcal{F}(b)(t)}_{L^1}^3 \big]+ \frac{\nu_m}{2}\norm{\mathcal{F}(\Delta b)(t)}_{L^1}.
\end{split}
\end{equation}

Analogously, from the second equation in the MHD system \eqref{MHD2}, we obtain

\begin{equation}\label{inequality_b}
\fr_t \big(\norm{\mathcal{F}(b)(t)}_{L^1} \big) + \frac{\nu_m}{2} \norm{\mathcal{F}(\Delta b)(t)}_{L^1}
\le \frac{27\lambda^{-1}}{2(2\pi)^6} \big[\norm{\mathcal{F}(u)(t)}_{L^1}^3+\norm{\mathcal{F}(b)(t)}_{L^1}^3 \big] + \frac{\nu}{2}\norm{\mathcal{F}(\Delta u)(t)}_{L^1}.
\end{equation} By \eqref{inequality_u} and \eqref{inequality_b}, it follows that
\begin{eqnarray}
\fr_t\Big(\norm{\mathcal{F}(u)(t)}_{L^1} + \norm{\mathcal{F}(b)(t)}_{L^1}\Big) &\leq& \frac{27\kappa^2\lambda^{-1}}{(2\pi)^6} \Big[\norm{\mathcal{F}(u)(t)}_{L^1}^3+\norm{\mathcal{F}(b)(t)}_{L^1}^3\Big],
\end{eqnarray} whence
\begin{eqnarray}
\fr_t\norm{\mathcal{F}(u,b)(t)}_{L^1}&\leq& \frac{27\kappa^2\lambda^{-1}}{(2\pi)^6}\norm{\mathcal{F}(u,b)(t)}_{L^1}^3.
\end{eqnarray}

Set $K= 27\kappa^2 /(2\pi)^6$. Then, for $t_{0} \in [0, T^*)$, consider a solution $v(t)$ to the initial value problem
\begin{equation}\label{v}
\left\{
\begin{split}
v'(t)&=K \lambda ^{-1}v^3(t) \\
v(t_{0})&= \norm{\mathcal{F}(u,b)(t_{0})}_{L^1}.
\end{split}
\right.
\end{equation} By integration, we have, for all $t \in [t_{0}, T^*)$,
\begin{equation}
v(t)^{-2}= {\norm{\mathcal{F}(u,b)(t_0)}_{L^1}^{-2}} -2K \lambda^{-1} (t-t_{0}).
\end{equation} It follows that $v(t)$ explodes if
$$T^{*}_{v}= t_0+\frac{\lambda}{2K{\norm{\mathcal{F}(u,b)(t_0)}_{L^1}^{2}}}.$$
Since ${\norm{\mathcal{F}(u,b)(t)}_{L^1}}\leq v(t)$, we conclude $T^{*}_{v}\leq T^*.$ Therefore,
\begin{equation*}
\norm{\mathcal{F}(u,b)(t_0)}_{L^1} \geq \frac{ (2\pi)^{3}\lambda^{1/2} }{3\sqrt{6} \kappa} {(T^{*}-t_0)^{-1/2}}.
\end{equation*}
\end{proof}

\begin{proof}[Proof of $(iii)$]
By Lemma \ref{ZinganoA8}, there holds, for $s>3/2$,
\begin{equation}
\norm{\mathcal{F}(u)}_{L^1}\leq c_1(s)\norm{u}_{L^2}^{1-\frac{3}{2s}}\norm{u}_{\dot{H}^s}^{\frac{3}{2s}}
\end{equation} and
\begin{equation}
\norm{\mathcal{F}(b)}_{L^1}\leq c_2(s)\norm{b}_{L^2}^{1-\frac{3}{2s}}\norm{b}_{\dot{H}^s}^{\frac{3}{2s}}.
\end{equation} Add these to obtain
\begin{eqnarray}
\norm{\mathcal{F}(u)}_{L^1}+\norm{\mathcal{F}(b)}_{L^1}&\leq& c(s)[\norm{u}_{L^2}^{1-\frac{3}{2s}}\norm{u}_{\dot{H}^s}^{\frac{3}{2s}}+\norm{b}_{L^2}^{1-\frac{3}{2s}}\norm{b}_{\dot{H}^s}^{\frac{3}{2s}}]\nonumber\\
&\leq& c(s)[(\norm{u}_{L^2}^{1-\frac{3}{2s}}+\norm{b}_{L^2}^{1-\frac{3}{2s}})\norm{u}_{\dot{H}^s}^{\frac{3}{2s}}+(\norm{u}_{L^2}^{1-\frac{3}{2s}}+\norm{b}_{L^2}^{1-\frac{3}{2s}})\norm{b}_{\dot{H}^s}^{\frac{3}{2s}}]\nonumber\\
&\leq& c(s)[(\norm{u}_{L^2}^{1-\frac{3}{2s}}+\norm{b}_{L^2}^{1-\frac{3}{2s}})(\norm{u}_{\dot{H}^s}^{\frac{3}{2s}}+\norm{b}_{\dot{H}^s}^{\frac{3}{2s}})],
\end{eqnarray} where $c(s)=c_1(s)+c_2(s)$.

But $\norm{u}_{L^2}^2+\norm{b}_{L^2}^2=\norm{(u,b)}_{L^2}^2$. Thus, $\norm{(u,b)}_{L^2}^2\geq \norm{u}_{L^2}^2$, which implies
$$
\norm{u}_{L^2}^{1-\frac{3}{2s}}\leq\norm{(u,b)}_{L^2}^{1-\frac{3}{2s}}
$$
for $s>3/2$ $\left(1-\frac{3}{2s}>0\right)$.

Analogously,
$$
\norm{b}_{L^2}^{1-\frac{3}{2s}}\leq\norm{(u,b)}_{L^2}^{1-\frac{3}{2s}}
$$
for $s>3/2$.

Similarly, since $\norm{u}_{\dot{H}^s}^2+\norm{b}_{\dot{H}^s}^2=\norm{(u,b)}_{\dot{H}^s}^2$, we have
$$
\norm{u}_{\dot{H}^s}^{\frac{3}{2s}}\leq\norm{(u,b)}_{\dot{H}^s}^{\frac{3}{2s}}\,\,\,\mbox{e}\,\,\,\norm{b}_{\dot{H}^s}^{\frac{3}{2s}}\leq\norm{(u,b)}_{\dot{H}^s}^{\frac{3}{2s}}.
$$

Then,
\begin{eqnarray}
\norm{\mathcal{F}(u,b)}_{L^1}\leq\norm{\mathcal{F}(u)}_{L^1}+\norm{\mathcal{F}(b)}_{L^1}&\leq& c(s)\norm{(u,b)}_{L^2}^{1-\frac{3}{2s}}\norm{(u,b)}_{\dot{H}^s}^{\frac{3}{2s}}.
\end{eqnarray}

By Theorem \ref{mainresult}(ii),
\begin{equation}
\left[\frac{\kappa}{3\sqrt{6}}(2\pi)^{3}\lambda^{1/2}{(T^*-t)^{-1/2}}\right]^{2s/3}\leq\left[c(s)\norm{(u,b)(t)}_{L^2}^{1-\frac{3}{2s}}\norm{(u,b)(t)}_{\dot{H}^s}^{\frac{3}{2s}}\right]^{2s/3},
\end{equation}
where $\kappa = \max\{1,\sigma\}^{-1}.$ Thus,
\begin{equation}
\Big[\frac{\kappa}{3\sqrt{6}}(2\pi)^{3}\lambda^{1/2}\Big]^{2s/3}(T^*-t)^{-s/3}\leq c(s)^{2s/3}\norm{(u,b)(t)}_{L^2}^{\frac{2s}{3}-1}\norm{(u,b)(t)}_{\dot{H}^s}.
\end{equation}

Therefore,
\begin{equation*}
\frac{c(s,\sigma) \lambda^{s/3}}{(T^*-t)^{s/3}} \leq \norm{(u,b)(t)}_{L^2}^{\frac{2s}{3}-1} \norm{(u,b)(t)}_{\dot{H}^s},
\end{equation*} as we wanted.
\end{proof}

\noindent \textbf{Acknowledgements.} We thank Prof. Paulo Zíngano for comments and fruitful discussions. D. Marcon was partially supported by CNPq -- Brazil through the postdoctoral scholarship PDJ/501839/2013-5. W. Melo was partially supported by CAPES (Ciência sem Fronteiras) grant 2778-13-0. W. Melo would like to thank the Federal University of Sergipe for supporting his long term visit to the University of New Mexico during the academic year 2013-2014.

\signdm

\signwm

\signls

\signjz

\end{document}